\newcommand{\showcomments}{yes}
\newcommand{\hidetodo}[1]
{\ifthenelse{\equal{\showcomments}{yes}}%
{#1}
}
\newsavebox{\commentbox}
\newtheorem{thm}{Theorem}[section]
\newtheorem{lem}[thm]{Lemma}
\newtheorem{cor}[thm]{Corollary}
\theoremstyle{definition}
\newtheorem{defn}[thm]{Definition}
\newtheorem{rem}[thm]{Remark}
\newtheorem*{rep@theorem}{\rep@title}
\newcommand{\newreptheorem}[2]{%
\newenvironment{rep#1}[1]{%
 \def\rep@title{#2 \ref{##1}}%
 \begin{rep@theorem}}%
 {\end{rep@theorem}}}
\DeclareMathOperator{\kernel}{ker}
\DeclareMathOperator{\rank}{rank}
\newcommand{\semidirect}{\ensuremath{\rtimes}}
\newcommand{\homology}{\ensuremath{{\sf{H}}}}
\newcommand{\field}[1]{\mathbb{#1}}
\newcommand{\integers}{\ensuremath{\field{Z}}}
\newcommand{\naturals}{\ensuremath{\field{N}}}
\newcommand{\euler}{\chi}
\newcommand{\nclose}[1]{\ensuremath{\langle\!\langle#1\rangle\!\rangle}}
\begin{document}
\title[Failure of the FGIP for ascending HNN extensions of free groups]{Failure of the finitely generated intersection property for ascending HNN extensions of free groups}
\maketitle
\begin{center}
\author{JACOB BAMBERGER AND DANIEL T. WISE}
\end{center}

\section{Abstract}

The main result in this paper is the failure of the finitely generated intersection property (FGIP) of ascending HNN extensions of non-cyclic finite rank free groups. This class of group consists of free-by-cyclic groups and properly ascending HNN extensions of free groups. We also give a sufficient condition for the failure of the FGIP in the context of relative hyperbolicity, we apply this to free-by-cyclic groups of exponential growth.

\section{Introduction}

\begin{defn} A group has the \textit{finitely generated intersection property} (FGIP) if the intersection of any two finitely generated subgroups is also finitely generated.
\end{defn}
The most famous class of groups having the FGIP are locally quasiconvex word-hyperbolic groups \cite{Short91}. This generalizes the fact that free groups have the FGIP which was proven by Howson \cite{Howson54}, and indeed the FGIP is sometimes referred to as the \textit{Howson property}.

The purpose of this paper is to examine the FGIP for ascending HNN extensions of finitely generated free groups. We show the failure of the FGIP for ascending HNN extensions of non-cyclic finite rank free groups. 
\begin{defn}[Ascending HNN extension of a free group] \label{def:ascHNN}
Let $\phi: F \rightarrow F$ be a monomorphism from a free group to itself. Its associated \textit{ascending HNN extension} is the group $G=\langle F, t \mid tft^{-1}=\phi(f) \ : \ \forall f \in F \rangle$. If $\phi$ is surjective then $G$ is \textit{free-by-cyclic} and we denote $G$ by $F \rtimes_\phi \integers$. If $\phi$ is not surjective then $G$ is a \textit{proper ascending HNN extension}.\end{defn}

Our main goal is the following result which is new for proper HNN extensions:

\begin{thm}
Any ascending HNN extension $G$ of a finitely generated free group $F$ of rank $\geq 2$ fails to have the FGIP.
\label{maintheorem}\end{thm}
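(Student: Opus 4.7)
The plan is to exhibit two finitely generated subgroups $H_1,H_2\leq G$ whose intersection fails to be finitely generated, modeled on the classical failure of the FGIP in $F_2\times\integers$: taking $H_1=F_2=\langle a,b\rangle$ and $H_2=\langle at,b\rangle$ (with $t$ central), one checks that $H_2\cong F_2$ with free basis $\{at,b\}$, and that $H_1\cap H_2$ is the kernel of the surjection $H_2\to\integers$ sending $at\mapsto 1,\ b\mapsto 0$, a normal subgroup of $F_2$ with infinite cyclic quotient, hence free of infinite rank.

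For a general ascending HNN $G=\langle F,t\mid tft^{-1}=\phi(f)\rangle$ I would take $H_1:=F$ and $H_2:=\langle at,b\rangle$, where $a,b\in F$ freely generate a rank-two free subgroup of $F$ (available since $\mathrm{rk}(F)\geq 2$). The canonical homomorphism $\rho:G\to\integers$ with $\rho(F)=0$, $\rho(t)=1$ restricts to $H_2$ as a map sending $at\mapsto 1,\ b\mapsto 0$. In the free-by-cyclic case $\ker\rho=F$, so $H_1\cap H_2=\ker(\rho|_{H_2})$, and the result reduces to showing that $H_2$ is free of rank two on $\{at,b\}$; the intersection is then a normal subgroup of $F_2$ with quotient $\integers$, hence free of infinite rank. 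Freeness of $H_2$ is verified by a rewriting argument in $G$: any word $w(at,b)$ can be put in the form $u\cdot t^k$ with $u\in F$ and $k$ the $at$-exponent sum of $w$, and one checks that for suitably chosen $a,b$ a non-trivial $w$ cannot simultaneously yield $u=1$ and $k=0$. The pathological case $\phi(b)=a^{-1}ba$ (which forces $[at,b]=1$) is avoided by a careful choice of the pair $(a,b)$, feasible since $\mathrm{rk}(F)\geq 2$ provides ample room.

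The principal obstacle is the proper ascending case, where $\ker\rho=\bigcup_{n\geq 0}t^{-n}Ft^n\supsetneq F$, so that $H_1\cap H_2$ is in general a proper subgroup of $\ker(\rho|_{H_2})$. Here I would examine the explicit elements
\[c_n:=(at)^n\,b\,(at)^{-n}=f_n\,\phi^n(b)\,f_n^{-1}\in F\cap H_2,\quad n\geq 0,\]
where $f_n:=a\,\phi(a)\cdots\phi^{n-1}(a)$, and invoke a Stallings subgroup-graph analysis in $F$ to show that the subgroup $\langle c_n:n\geq 0\rangle\leq F$ has infinite Stallings graph, hence has infinite rank. The subtle and essential step is to upgrade this to the conclusion that $H_1\cap H_2$ itself is of infinite rank: a priori the intersection could be a larger but still finitely generated subgroup of $F$ containing the $F_\infty$ generated by the $c_n$, since finitely generated free groups of rank at least two routinely harbor free subgroups of infinite rank. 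Overcoming this obstacle---either by computing the full Stallings graph of $H_1\cap H_2$ in $F$, or by exhibiting $H_1\cap H_2$ as the kernel of a surjection from a finitely generated group onto an infinite quotient---is where the technical heart of the argument lies in the proper ascending case.
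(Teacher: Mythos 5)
Your proposal has a genuine gap, and you identify it yourself in the final paragraph: in the proper ascending case you can only produce an infinite-rank subgroup \emph{inside} $H_1\cap H_2$, which does not preclude $H_1\cap H_2$ itself from being finitely generated. The paper closes exactly this gap with what it calls the main principle, Lemma~\ref{properconj}: if $K$ is a nontrivial subgroup of a free group and some element $g$ satisfies $gKg^{-1}\subseteq K$ while $g^n\notin K$ for all $n>0$, then $K$ is not finitely generated (the covering-space proof produces infinitely many disjoint lifts of a nontrivial cycle, giving infinite rank in $\homology_1$). This is the tool that would upgrade your observation about the elements $c_n$ into the desired conclusion: with $H_2$ free and $at\in H_2$ satisfying $(at)F(at)^{-1}\subseteq F$, hence $(at)(H_1\cap H_2)(at)^{-1}\subseteq H_1\cap H_2$, while $(at)^n\notin F$ for $n>0$, the lemma applies directly to $H_1\cap H_2$ inside the free group $H_2$. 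Without this lemma or an equivalent, your Stallings-graph plan does not close.

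There is also a second, unaddressed gap: you never establish that $H_2=\langle at,b\rangle$ is free of rank two for some choice of $a,b$, either in the proper ascending case or in the free-by-cyclic case; the ``rewriting argument'' is only gestured at, and the one pathology you name ($\phi(b)=a^{-1}ba$) is not the only possible obstruction. The paper's own constructions are different in both subcases and avoid this issue. In the proper ascending case it does \emph{not} pair $at$ with $b$; it chooses a single $f\in F-\phi(F)$, uses Hall's theorem (via Corollary~\ref{cor:splits}) to arrange inductively that $\langle\phi^i(f):i\in\naturals\rangle=\ast_{i\in\naturals}\langle\phi^i(f)\rangle$, deduces from Lemma~\ref{freeinf} and Lemma~\ref{infgencor} that $H=\langle f,t\rangle$ is free of rank two, and then applies Lemma~\ref{properconj} to $H\cap F$. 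In the free-by-cyclic case the paper does not attempt a direct two-generator subgroup at all: it splits into polynomial and exponential growth via improved relative train track maps, locates an $F_2\times\integers$ subgroup in the polynomial case, and invokes relative hyperbolicity plus a ping-pong argument (Theorem~\ref{relhypconj2}) in the exponential case. Your unified $\langle at,b\rangle$ approach is appealing in principle, but as stated it is incomplete on both the freeness of $H_2$ and the non-finite-generation of $H_1\cap H_2$ in the proper ascending case.
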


The proof of Theorem~\ref{maintheorem} is partitioned into Theorem~\ref{properHNN}, Theorem~\ref{polynomialcase}, and Corollary~\ref{freebchyp}. This generalizes the following result of Berns and Brunner \cite{BurnsRobBrun} who generalized Moldavanskii's observation that the FGIP fails for $F \times \integers$  \cite{Moldavanski68}.

\begin{thm}[Berns-Brunner]
Let $G=F\semidirect\integers$ where $F$ is a finitely generated free group of rank $\geq 2$. Then $G$ does not have the FGIP.
\label{burnsandbrunner}
\end{thm}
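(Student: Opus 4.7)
The plan is to generalize Moldavanski's argument \cite{Moldavanski68} for $F \times \integers$ to arbitrary $\phi \in \Aut(F)$. Writing $G = F \rtimes_\phi \integers = \langle F, t \mid tft^{-1} = \phi(f)\rangle$, I would take $H_1 = F$ and $H_2 = \langle at, b \rangle$, where $a, b$ are two members of a free basis of $F$. Both subgroups are manifestly finitely generated, and the goal is to show that $H_1 \cap H_2 = H_2 \cap F$ is of infinite rank.

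The quotient $G \twoheadrightarrow G/F \cong \integers$ sends $at \mapsto 1$ and $b \mapsto 0$, so $H_2$ surjects onto $\integers$ with kernel $H_2 \cap F$. In particular, $H_2 \cap F$ is normal in $H_2$ of infinite index. Thus if one shows that $H_2$ is itself free of rank $2$, then a standard Nielsen--Schreier argument---any infinite-index normal subgroup of a non-cyclic free group has infinite rank---yields that $H_2 \cap F$ is infinitely generated, completing the proof. To establish freeness, I would analyze any putative non-trivial reduced relation $R(at, b) = 1$ using the normal form $G = F \cdot \langle t \rangle$: expanding via $tft^{-1} = \phi(f)$ rewrites the left side as $w \cdot t^{\epsilon}$, where $\epsilon$ is the $(at)$-exponent sum of $R$ and $w \in F$ is a word in $\phi$-iterates of $a$ and $b$; both $\epsilon = 0$ and $w = 1$ must then hold, and the latter should be impossible when $a, b$ are generic.

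The main obstacle is that freeness of $\langle at, b\rangle$ is not uniform in $\phi$. For example, if $\phi$ swaps $a$ and $b$, then $(at)\,b\,(at)^{-1} = a\,\phi(b)\,a^{-1} = a$, forcing $\langle at, b \rangle = G$ and making $H_2 \cap F = F$ finitely generated. To handle such pathological $\phi$ I would split the argument into cases based on the order of $\phi$. Since FGIP passes in both directions through finite-index inclusions (an elementary direct argument from the definition), when $\phi$ has finite order $n$ the finite-index subgroup $F \rtimes_{\phi^n} \integers = F \times \integers$ of $G$ fails FGIP by Moldavanski \cite{Moldavanski68}, and hence $G$ fails FGIP as well. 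For $\phi$ of infinite order, I would replace $a$ and $b$ with sufficiently generic elements---such as sufficiently large powers of basis elements---so that the freeness of $\langle at, b\rangle$ can be verified by the normal-form analysis above, and conclude as before.
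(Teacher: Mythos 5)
Your overall strategy---exhibit a rank-$2$ free subgroup $H_2 \leq G$ surjecting onto $\integers$ and then observe that $H_2 \cap F$ is an infinite-index normal subgroup of the free group $H_2$, hence of infinite rank---is natural and is indeed the engine driving several of the paper's arguments (cf.\ Lemma~\ref{properconj}, Lemma~\ref{infgencor}, and the ping-pong argument in Section~\ref{relhyp}). However, there is a genuine gap at the step you flag as ``the main obstacle'': you never actually establish that $\langle at, b\rangle$ (or $\langle a't, b'\rangle$ for some replacement pair) is free of rank $2$ when $\phi$ has infinite order. This is exactly the hard part, and ``sufficiently generic elements, such as sufficiently large powers of basis elements'' is a hope, not an argument.

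The difficulty is worse than your swap example suggests, because the problematic automorphisms are not confined to finite order. Take the rank-$2$ Dehn twist $\phi(a)=a$, $\phi(b)=ba$, which has infinite order. Setting $\psi(x) = a\phi(x)a^{-1}$, the conjugates $(at)^i b (at)^{-i}=\psi^i(b)$ satisfy $\psi^i(b)=a^i b$, and these do not freely generate: for instance $(ab)b^{-1}=(a^2b)(ab)^{-1}=a$ yields the nontrivial relation $(ab)\,b^{-1}\,(ab)\,(a^2b)^{-1}=1$. So for this infinite-order $\phi$ the naive choice fails, and your finite-order case split does not rescue it. Replacing $a,b$ by powers changes the computation but you give no mechanism to verify freeness, and it is far from clear that some choice works for every polynomially growing automorphism. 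This is precisely why the paper does not take your route: it proves Theorem~\ref{burnsandbrunner} by first passing to an improved relative train track representative and splitting according to growth type. In the polynomially growing case (Theorem~\ref{polynomialcase}) the train track filtration is used to locate an $F_2 \times \integers$ subgroup and one invokes Moldavanskii (Theorem~\ref{freecrossint}); in the exponentially growing case (Corollary~\ref{freebchyp}) relative hyperbolicity of the mapping torus plus the ping-pong lemma (Lemma~\ref{freesubg2}) supplies the free rank-$2$ subgroup that your normal-form analysis was meant to produce. Until you can actually prove freeness of $\langle a't, b'\rangle$ for infinite-order $\phi$---and the Dehn twist shows this requires real work and a careful choice of $a',b'$---the proposal is incomplete.

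Two smaller remarks. Your reduction of the finite-order case is fine: FGIP does pass both ways along finite-index inclusions, so $F\rtimes_{\phi^n}\integers = F\times\integers \leq_{f.i.} G$ handles it. And your application of Nielsen--Schreier is correctly phrased (you apply it inside the free group $H_2$, where $H_2 \cap F$ is normal, not inside $F$, where it need not be).
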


Finitely generated subgroups of ascending HNN extensions of rank $1$ free groups are easily shown to be either trivial, cyclic or of finite index (see e.g. \cite{LiWise2017}). Intersecting a subgroup $H$ with a finite index subgroup $K$ yields a finite index subgroup of $H$. Hence if $H$ is finitely generated so is $H\cap K$. We therefore focus on ascending HNN extensions of a free group $F$ with rank$(F)\geq2$.

The main principle here is that a nontrivial infinite index normal subgroup of a free group cannot be finitely generated (see Corollary~\ref{normalfinite}). Thus if $G$ contains a finitely generated free group $K$ and a finitely generated normal subgroup $N$, the intersection $K\cap N$ provides an opportunity for the failure of the FGIP.
Theorem~\ref{maintheorem} exploits a generalization of the principle to subgroups that are conjugated into themselves as explained in Lemma~\ref{properconj}.

When $G = N \semidirect \integers$ is a hyperbolic group and $N$ is finitely generated, the failure of the FGIP for $G$ has a crisp explanation using the ping pong lemma (see Section~\ref{pingpongsubsect}). The idea is that for any $w\in N-\{1_G\}$, the ping pong lemma provides a free subgroup $K=\langle w^m , t^m \rangle$ and $K\cap N$ is not finitely generated. We develop this idea further in Section~\ref{relhyp} to obtain the following sufficient condition for the failure of the FGIP in the relative hyperbolic framework:
\begin{reptheorem}{relhypconj2}
Let $G$ be hyperbolic relative to a collection of subgroups. Let $N \subseteq G$ be a finitely generated subgroup containing a loxodromic element $w$. Suppose $tNt^{-1} \subseteq N$ for some infinite order $t$ with $\langle t \rangle  \cap N = \{1_G\}$.  Then $G$ fails to have the FGIP. In particular, there exists $m$ such that $\langle t^m, w^m \rangle \cap N$ is not finitely generated.
\end{reptheorem}

This applies to a non-elementary hyperbolic group $G$ arising as an ascending HNN extension of a group $N$. Hence Theorem~\ref{relhypconj2} generalizes the hyperbolic case of Theorems~\ref{maintheorem}~and~\ref{burnsandbrunner}.

We combine Theorem~\ref{relhypconj2} with recent results about relative hyperbolicity of free-by-cyclic groups \cite{dahmani2019relative, gautero2007mappingtorus, ghosh2018relative} to prove the failure of the FGIP for exponentially growing free-by-cyclic groups in Section~\ref{rexporelhyp}. This explanation is complex since it depends upon a constellation of deep results, but it is  interesting to see how the exponential case fits into the relative hyperbolic framework via Theorem~\ref{relhypconj2}. We expect this unity to prevail as relatively hyperbolic structures are constructed for general ascending HNN extensions.

In Section~\ref{properascendingsection} we prove the failure of the FGIP for proper ascending HNN extensions of finite rank free groups. In Section~\ref{traintracksection} we introduce train track maps. The theory of relative train track maps explains that automorphisms of free groups are divided into polynomially growing and exponentially growing cases \cite{Bestvina_2000, traintracks}. In Section~\ref{polycase} we use relative train track maps to explain the FGIP failure for $F\semidirect\integers$ in the polynomially growing case. This exploits their structure as multiple cyclic HNN extensions. In Section~\ref{rexporelhyp} we apply Theorem~\ref{relhypconj2} to prove that the FGIP fails for $F\semidirect\integers$ in the exponentially growing case. Note that the exponentially growing case is the main case since failure in the polynomially growing case is a simple consequence of the failure for $F \times \integers$. While our argument is more complex than Theorem~\ref{burnsandbrunner} (which we were originally oblivious to), we hope it will appeal to the contemporary reader for whom train tracks and relative hyperbolicity are established tools.

\section{Proper ascending HNN extension}
\label{properascendingsection}
In this section we prove the following theorem:

\begin{thm}
Any proper ascending HNN extension of a non-cyclic finitely generated free group fails to have the FGIP. \label{properHNN}
\end{thm}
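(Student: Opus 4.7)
The plan is to construct two finitely generated subgroups of $G$ whose intersection is not finitely generated. The key input is that $\phi$ is a proper monomorphism, so $tFt^{-1} = \phi(F) \subsetneq F$; equivalently, $t$ conjugates $F$ properly into itself, which is the ``proper conjugation'' situation flagged in the introduction. The candidate pair will be $F$ itself and a suitably constructed two-generator subgroup $K$, modeled on the Burns-Brunner argument for $F \times \integers$.

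I would fix $a \in F$ and $b \in F \setminus \phi(F)$, the latter available because $\phi(F)$ is a proper subgroup of $F$ (and $F$ has rank $\geq 2$, giving room to arrange any genericity required). Set $K := \langle at, b \rangle \leq G$. A direct calculation from $tft^{-1} = \phi(f)$ gives $(at)^n = A_n t^n$ with $A_n := a\,\phi(a)\,\phi^2(a) \cdots \phi^{n-1}(a) \in F$, and hence
\[
c_n := (at)^n b (at)^{-n} = A_n\,\phi^n(b)\,A_n^{-1} \in F
\]
for every $n \geq 0$. Therefore $L := \langle c_n : n \geq 0 \rangle$ is contained in $K \cap F$. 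I next need to check that $K$ is freely generated by $at$ and $b$; this should follow from a ping-pong argument on the Bass-Serre tree $T$ of $G$, using that $at$ is loxodromic (it has nonzero $t$-exponent, so it cannot be conjugate into any vertex stabilizer $gFg^{-1}$) while $b \in F$ fixes the vertex $v_0$ stabilized by $F$ and properly permutes its ``descending'' neighbors (moving the edge toward $tv_0$, whose stabilizer $\phi(F)$ does not contain $b$). Once $K \cong F_2$ is established, the kernel of the homomorphism $K \to \integers$ sending $at \mapsto 1$ and $b \mapsto 0$ is the normal closure of $b$ in $K$, and as a normal subgroup of infinite index in a rank-two free group it is free on the Schreier basis $\{(at)^n b (at)^{-n} : n \in \integers\}$; hence $L$ is itself free of countably infinite rank on the sub-basis $\{c_n\}_{n \geq 0}$.

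The main obstacle is that ``$L \subseteq K \cap F$ is non-f.g.'' does not by itself force $K \cap F$ to be non-f.g., since a non-finitely-generated subgroup can perfectly well sit inside a finitely generated one. I would close this gap by establishing the equality $K \cap F = L$. A Britton's-lemma approach analyzes a word in $at$ and $b$ representing an element of $K$ and tracks when the element actually lies in $F$ rather than merely in the ascending union $F^\infty = \bigcup_{n \geq 0} t^{-n} F t^n$; the resulting combinatorics should pin down membership in $F$ to exactly the words expressible as products of $c_n^{\pm 1}$ with $n \geq 0$ (the negative-index conjugates $(at)^{-n} b (at)^n$ land in $F^\infty \setminus F$, since $t$-reduction requires $A_n^{-1} b A_n \in \phi^n(F)$ for each stage). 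Equivalently, one may appeal directly to Bass-Serre theory: $K \cap F$ is the $K$-stabilizer of $v_0$ in $T$, and analyzing the induced splitting of $K$ acting on $T$ identifies this stabilizer with $L$. Either way, once $K \cap F = L$ is established, the two finitely generated subgroups $F$ and $K$ intersect in the free group $L$ of countably infinite rank, yielding the failure of the FGIP for $G$.
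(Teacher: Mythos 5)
Your overall strategy --- build a two-generator free subgroup $K$ containing the stable letter and intersect it with $F$ --- matches the paper's, but there is a genuine gap at the first step: taking $b$ to be \emph{any} element of $F\setminus\phi(F)$ is not enough to make $K=\langle at,b\rangle$ free of rank two (and the ping-pong you gesture at does not go through, since $b$ is elliptic on the Bass--Serre tree). A concrete counterexample: let $F=\langle x,y\rangle$, $\phi(x)=x^2$, $\phi(y)=y^2$ (a proper monomorphism), and take $a=1$, $b=x\in F\setminus\phi(F)$. Then $txt^{-1}=x^2$, so $K=\langle t,x\rangle\cong BS(1,2)$, which is solvable, not free; moreover one checks $K\cap F=\langle x\rangle$, which \emph{is} finitely generated, so your chosen pair $(F,K)$ fails to witness non-FGIP. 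The hypothesis you actually need is not $b\notin\phi(F)$ but $\langle b,\phi(F)\rangle=\langle b\rangle\ast\phi(F)$, which is what the paper extracts from Marshall Hall's theorem (Theorem~\ref{thm:MarshallHall} via Corollary~\ref{cor:splits}); ``$F$ has rank $\geq 2$, giving room to arrange genericity'' does not substitute for it.

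With the Hall-supplied $f$, the paper proves freeness of $\langle f,t\rangle$ algebraically: an induction on free-product decompositions shows $\{\phi^i(f):i\in\naturals\}$ is a free basis, Lemma~\ref{freeinf}/Corollary~\ref{freecor} upgrades this to $\{t^ift^{-i}:i\in\integers\}$, and Lemma~\ref{infgencor} (via the five lemma) gives $\langle f,t\rangle\cong F_2$. Your second sketch (that ``$L\subseteq K\cap F$ non-f.g.\ doesn't force $K\cap F$ non-f.g., so prove $K\cap F=L$'') identifies a real issue but then leaves the closure to an unexecuted Britton/Bass--Serre analysis. The paper avoids that computation entirely with Lemma~\ref{properconj}: since $t\in K$ we have $t(K\cap F)t^{-1}=K\cap\phi(F)\subseteq K\cap F$, while $t^n\notin F$ for $n>0$ and $K\cap F$ is a proper nontrivial subgroup of the free group $K$; the lemma then yields non-finite-generation directly, no description of $K\cap F$ required. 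So beyond the incorrect choice of $b$, both of your remaining steps (freeness of $K$, and $K\cap F=L$) are replaced in the paper by shorter closed arguments, and the second of them is dispensable.
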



We begin by recalling a Theorem about free groups first proved by Hall \cite{MR28836}:

\begin{thm}
If $F$ is a free group and $H$ is a finitely generated subgroup then $H$ is a free factor in a finite-index subgroup of $F$. \label{thm:MarshallHall}
\end{thm}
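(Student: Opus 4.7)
The plan is to prove Marshall Hall's theorem by the modern covering-space / Stallings folding argument. I would first realize $F$ as $\pi_1(R, v)$ where $R$ is a rose whose petals are labeled by a fixed free basis of $F$. The subgroup $H$ corresponds to a based covering $\tilde R_H \to R$, and I would let $\Gamma_H \subseteq \tilde R_H$ be the Stallings core: the smallest connected subgraph containing the basepoint such that $\pi_1(\Gamma_H, \tilde v) \to \pi_1(\tilde R_H, \tilde v) = H$ is an isomorphism. Because $H$ is finitely generated, $\Gamma_H$ is a finite labeled graph that immerses into $R$ (i.e.\ is \emph{folded}: at each vertex, for each basis element $a$, there is at most one outgoing and at most one incoming $a$-edge).

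Next I would complete this immersion to a finite-sheeted covering. For each generator $a$, let $V_a^+$ (resp.\ $V_a^-$) be the set of vertices of $\Gamma_H$ missing an outgoing (resp.\ incoming) $a$-edge; a double count of the existing $a$-edges shows $|V_a^+| = |V_a^-|$. I would therefore pick any bijection $V_a^+ \to V_a^-$ and, for each matched pair $(x, y)$, adjoin an $a$-edge from $x$ to $y$. The result is a finite graph $\tilde R$ together with a covering map $\tilde R \to R$ extending the immersion of $\Gamma_H$, so $\tilde R$ corresponds to a finite-index subgroup $K \leq F$, and the inclusion $\Gamma_H \hookrightarrow \tilde R$ realizes the inclusion $H \hookrightarrow K$.

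Finally I would choose a spanning tree $T_H$ of $\Gamma_H$, extend it to a spanning tree $T$ of $\tilde R$, and use the standard graph-of-spaces free basis: the edges of $\Gamma_H \setminus T_H$ yield a free basis of $H$, and the edges of $\tilde R \setminus T$ yield a free basis of $K$ that strictly contains the former. Consequently $K$ splits as a free product of $H$ with the free group generated by the remaining non-tree edges, exhibiting $H$ as a free factor of the finite-index subgroup $K$.

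The only real subtlety is the completion step, and the core lemma is the identity $|V_a^+| = |V_a^-|$ guaranteeing that the defects at vertices can be paired up by \emph{new} edges without adding new vertices and without disturbing $\Gamma_H$ as an embedded subgraph; everything else is routine once $\Gamma_H$ sits as a subgraph of a finite cover.
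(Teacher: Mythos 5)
The paper cites Marshall Hall's theorem without proof, so there is no internal argument to compare against; you are supplying a proof where the authors deferred to the literature. Your argument is the standard Stallings folding proof and it is correct. The key completion step is sound: in a finite folded labeled graph, each $a$-edge supplies exactly one outgoing and one incoming $a$-end, so the deficits $|V_a^+|$ and $|V_a^-|$ are both equal to (number of vertices) minus (number of $a$-edges), and any bijection lets you complete to a finite cover $\tilde R$ of the rose by adding edges only. One small simplification you could make explicit: since the completion adds no vertices, a spanning tree $T_H$ of $\Gamma_H$ is \emph{already} a spanning tree of $\tilde R$, so no extension is needed; the free basis of $K=\pi_1(\tilde R)$ read off from $T_H$ then visibly contains the free basis of $H=\pi_1(\Gamma_H)$ read off from the same tree, giving $K = H \ast \langle \text{new edges} \rangle$ directly. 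This is a cleaner route than Hall's original Schreier-system argument, and it is the one most readers of this paper would expect, given its reliance on covering-space reasoning elsewhere (e.g.\ in Lemma~\ref{properconj}).
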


\begin{cor}
Let $H \subsetneq F$ be a finitely generated infinite index subgroup of a free group. There exists a non-trivial $f$ such that $\langle f,  H \rangle=\langle f \rangle \ast H$.
\label{cor:splits}
\end{cor}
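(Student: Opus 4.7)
The plan is to invoke Theorem~\ref{thm:MarshallHall} directly to reduce to the case where $H$ sits as a free factor, and then to extract $f$ from the complementary free factor. First I would apply Marshall Hall's theorem to obtain a finite-index subgroup $K$ of $F$ together with a free factorization $K = H \ast L$.

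Next I would argue that $L$ is nontrivial. Since $[F:K] < \infty$ and $[F:H] = \infty$, we must have $[K:H] = \infty$. If $L$ were trivial then $K = H$, contradicting $[K:H] = \infty$. Hence $L$ contains a nontrivial element $f$, and since $L$ is free (being a subgroup of $F$), the element $f$ has infinite order and generates an infinite cyclic subgroup.

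Finally I would verify the free product decomposition. In the free product $K = H \ast L$, the subgroup generated by $H$ together with $\langle f \rangle \subseteq L$ is canonically isomorphic to $H \ast \langle f \rangle$: this follows from the universal property of free products, since the inclusions $H \hookrightarrow K$ and $\langle f \rangle \hookrightarrow K$ induce a map $H \ast \langle f \rangle \to K$ whose image is exactly $\langle f, H \rangle$, and injectivity is immediate from the normal form theorem in $H \ast L$ (a reduced word alternating between $H$ and $\langle f \rangle$ is already a reduced word alternating between $H$ and $L$, hence nontrivial). Since $K \leq F$, the same decomposition $\langle f, H \rangle = \langle f \rangle \ast H$ holds inside $F$.

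I do not expect a serious obstacle: Marshall Hall provides the free factor structure, the index computation rules out $L = \{1\}$, and the free product claim is a standard consequence of normal forms. The only place to be slightly careful is ensuring that passing from $K$ back to $F$ does not collapse the free product, but this is automatic because subgroup inclusions preserve free product structure.
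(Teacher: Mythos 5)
Your proof is correct and follows the same route as the paper: apply Marshall Hall's theorem to write a finite-index subgroup as $H \ast L$, observe $L \neq \{1\}$ since $[F:H]=\infty$, and take any nontrivial $f \in L$. You spell out the nontriviality of $L$ and the normal-form argument, which the paper leaves implicit, but the method is identical.
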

\begin{proof}
Theorem~\ref{thm:MarshallHall} says that some finite index subgroup $G \subseteq F$ satisfies $G=H\ast K$, so taking any non-trivial $f \in K$ concludes the proof.
\end{proof}

We now state and prove results that will be useful throughout the paper.

\begin{lem}
Let $\phi:G \rightarrow G$ be group automorphism. The following are equivalent for $w\in G$. 
\begin{enumerate}
    \item $\{\phi^{i}(w), \ i \in \mathbb{Z}\}$ form a basis for a free subgroup.
    \item $\{\phi^{i}(w), \ i \in \mathbb{N}\}$ form a basis for a free subgroup.
\end{enumerate}
\label{freeinf}
\end{lem}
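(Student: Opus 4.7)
The plan is as follows. The direction $(1) \Rightarrow (2)$ is immediate, since any subset of a free basis is itself a free basis for the subgroup it generates; restricting the index set from $\integers$ to $\naturals$ costs nothing. All the content lies in $(2) \Rightarrow (1)$.

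For $(2) \Rightarrow (1)$, the key observation I will use is that $\phi$ being an automorphism makes every iterate $\phi^n$ an injective endomorphism of $G$ acting on the family by uniform index shift: $\phi^n(\phi^i(w)) = \phi^{i+n}(w)$. Suppose for contradiction that $\{\phi^i(w) : i \in \integers\}$ fails to be a free basis of the subgroup it generates. Then there is a nontrivial reduced word $W(x_{-n}, \ldots, x_m)$ in abstract generators such that $W\bigl(\phi^{-n}(w), \ldots, \phi^m(w)\bigr) = 1_G$. Applying $\phi^n$ to both sides uniformly translates every index by $n$, yielding $W\bigl(\phi^0(w), \ldots, \phi^{n+m}(w)\bigr) = 1_G$ after relabeling $x_i \mapsto x_{i+n}$. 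A uniform shift of indices preserves the pattern of adjacent equalities and sign cancellations, so the relabeled word is still nontrivial and reduced. This exhibits a nontrivial reduced relation among elements of $\{\phi^i(w) : i \in \naturals\}$, contradicting $(2)$.

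There is essentially no obstacle here. The only mild care needed is to keep track of the distinction between a formal reduced word in abstract symbols and its evaluation in $G$, and to note that uniform index shifting preserves both nontriviality and reducedness of the formal word. The role of the hypothesis that $\phi$ is an automorphism is exactly that it possesses inverse iterates, which is what allows negatively indexed generators to be pushed into the $\naturals$-indexed range; without this, the implication $(2) \Rightarrow (1)$ could fail for a mere injective endomorphism.
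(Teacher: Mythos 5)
Your proof is correct and uses essentially the same idea as the paper's: apply a sufficiently high power of $\phi$ (which is injective, being an automorphism) to shift all indices appearing in a putative relation into $\mathbb{N}$, observing that the shift preserves nontriviality and reducedness of the formal word. The paper's version is terser (it writes an unsigned product $\prod \phi^{n_i}(w)$ and shifts by $\max(|n_i|)$ without explicitly addressing reducedness), while you are somewhat more careful in distinguishing the abstract reduced word from its evaluation, but the argument is the same.
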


\begin{proof}
    (1) $\Rightarrow$ (2) is clear, so we focus on (2) $\Rightarrow$ (1). If a product $\prod_{i=1}^{k}\phi^{n_i}(w)$  where $n_i \in \integers$ represents the identity then $\phi^{max(|n_i|)}(\prod_{i=1}^{k}\phi^{n_i}(w))=e$, so $\prod_{i=1}^{k}\phi^{m_i}(w)$ where $m_i=max(|n_i|)+n_i \in \mathbb{N}$ represents the identity. 
\end{proof}

\begin{cor}
Let $G$ be a group and $t,w \in G$. The following are equivalent:
\begin{enumerate}
    \item $\{t^{i}wt^{-i}, \ i \in \mathbb{Z}\}$ form a basis for a free subgroup.
    \item $\{t^{i}wt^{-i}, \ i \in \mathbb{N}\}$ form a basis for a free subgroup.
\end{enumerate}
\label{freecor}
\end{cor}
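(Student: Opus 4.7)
The plan is to reduce this statement directly to Lemma~\ref{freeinf} by using conjugation as the automorphism. Specifically, I would define $\phi: G \to G$ by $\phi(g) = tgt^{-1}$. Since conjugation is an inner automorphism, $\phi$ is a group automorphism of $G$. A straightforward induction on $i$ (handling the positive and negative cases separately, using $\phi^{-1}(g) = t^{-1} g t$) shows that $\phi^i(w) = t^i w t^{-i}$ for all $i \in \integers$.

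With this identification, the set $\{t^i w t^{-i} : i \in \integers\}$ coincides with $\{\phi^i(w) : i \in \integers\}$, and similarly $\{t^i w t^{-i} : i \in \naturals\}$ coincides with $\{\phi^i(w) : i \in \naturals\}$. Thus conditions (1) and (2) of the corollary are precisely conditions (1) and (2) of Lemma~\ref{freeinf} applied to $\phi$ and $w$, so the equivalence is immediate.

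I do not expect any real obstacle here; the content is entirely carried by Lemma~\ref{freeinf}, and the only thing to verify is the (trivial) observation that conjugation by $t$ is an automorphism of $G$. The one minor point worth stating explicitly in the proof is the identity $\phi^i(w) = t^i w t^{-i}$ for $i \in \integers$, so that the translation between the two formulations is unambiguous.
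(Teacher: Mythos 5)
Your proposal is exactly the paper's argument: apply Lemma~\ref{freeinf} to the inner automorphism $\phi(g)=tgt^{-1}$, noting that $\phi^i(w)=t^iwt^{-i}$. The paper states this in one line; you have merely spelled out the (correct) details.
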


\begin{proof}
Apply Lemma~\ref{freeinf} to the inner automorphism consisting of conjugation by the element $t$.
\end{proof}

\begin{lem} Let $H$ be a group generated by elements $w$ and $t$, and let $\rho: H \rightarrow \integers$ be a homomorphism with $\rho(t)=1$ and $\rho(w)=0$. Then 
$\ker(\rho)=\nclose w=\langle t^{i}wt^{-i}: i \in \integers \rangle$. 
In particular, if $\{t^{i}wt^{-i}, \ i \in \mathbb{Z}\}$ forms a basis for a free subgroup then $H$ is free of rank $2$.
\label{infgencor}
\end{lem}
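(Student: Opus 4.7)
The plan is to establish the triple inclusion
\[
\langle t^{i}wt^{-i} : i \in \integers \rangle \;\subseteq\; \nclose{w} \;\subseteq\; \ker(\rho) \;\subseteq\; \langle t^{i}wt^{-i} : i \in \integers \rangle,
\]
since then all three subgroups coincide. The first inclusion is immediate because each $t^{i}wt^{-i}$ is a conjugate of $w$, and the second holds because $\rho(w)=0$ together with normality of $\ker(\rho)$. Only the third inclusion carries any content.

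For the third inclusion, I would take $h \in \ker(\rho)$ and, using $H = \langle w, t\rangle$, express
\[
h = t^{a_1} w^{b_1} t^{a_2} w^{b_2} \cdots t^{a_n} w^{b_n} t^{a_{n+1}}
\]
with $a_j, b_j \in \integers$. Setting $c_j := a_1 + \cdots + a_j$ and inserting matching pairs $t^{-c_j}t^{c_j}$ between the blocks, this telescopes to
\[
h = (t^{c_1} w^{b_1} t^{-c_1})(t^{c_2} w^{b_2} t^{-c_2}) \cdots (t^{c_n} w^{b_n} t^{-c_n})\, t^{c_{n+1}}.
\]
Since $\rho(h) = c_{n+1} = 0$, the trailing $t^{c_{n+1}}$ factor is trivial, exhibiting $h$ as a product of elements of $\langle t^{i}wt^{-i} : i \in \integers\rangle$.

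For the \emph{in particular} statement, I would compare $H$ with the rank-$2$ free group $F = F(a,b)$. Define the surjection $\pi : F \to H$ by $a \mapsto w$, $b \mapsto t$, and let $\sigma : F \to \integers$ be the homomorphism with $\sigma(a) = 0$ and $\sigma(b) = 1$, so that $\rho \circ \pi = \sigma$. Classically $\ker(\sigma)$ is freely generated by $\{b^{i}ab^{-i} : i \in \integers\}$ (visible from the covering space of $S^1 \vee S^1$ which is a line with an $a$-loop at each integer vertex), and by the first part together with the hypothesis, $\ker(\rho)$ is freely generated by $\{t^{i}wt^{-i} : i \in \integers\}$. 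The restriction $\pi|_{\ker(\sigma)}$ carries one basis bijectively onto the other, so it is an isomorphism; the five lemma applied to the short exact sequences $1 \to \ker(\sigma) \to F \to \integers \to 1$ and $1 \to \ker(\rho) \to H \to \integers \to 1$ (with identity on the quotient) then forces $\pi$ itself to be an isomorphism, so $H \cong F_2$. The only real work is the telescoping rewrite, which is routine once one writes down the partial sums $c_j$; I do not anticipate any serious obstacle.
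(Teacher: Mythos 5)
Your proof is correct and follows the same overall architecture as the paper's: both arguments identify $\ker(\rho)$ with $\langle t^i w t^{-i} : i \in \integers\rangle$ and then establish the ``in particular'' clause via the five lemma applied to the comparison map from the rank-$2$ free group onto $H$. The only genuine difference is in the first step. The paper proves $\ker(\rho) = \langle t^i w t^{-i}\rangle$ by first establishing the analogous statement inside the free group $J = F(w,t)$ (using the infinite cyclic cover of $S^1 \vee S^1$) and then pushing forward along the surjection $J \twoheadrightarrow H$, noting that kernels map onto kernels and conjugates map to conjugates. You instead prove it by a direct telescoping computation entirely inside $H$: write $h\in\ker(\rho)$ as an alternating word in powers of $t$ and $w$, insert $t^{-c_j}t^{c_j}$ with $c_j$ the partial exponent sums, and observe that the trailing factor $t^{c_{n+1}} = t^{\rho(h)}$ is trivial. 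This is arguably cleaner, as it keeps the elementary half of the lemma purely algebraic and avoids a detour through the auxiliary free group until the five-lemma step actually requires it. That said, you still invoke the covering-space description of $\ker(\sigma)$ as freely generated by $\{b^iab^{-i}\}$ for the second half, which is precisely the fact the paper uses, so the two proofs end up drawing on the same ingredients; yours just reorders them so that the computational part stands alone.
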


We use the notation $\nclose w$ for the normal closure of $w$.


\begin{proof}
    
    Let $J$ be the free group on $w$ and $t$. We get a homomorphism $J\rightarrow \integers$ by composing $J \rightarrow H \rightarrow \integers$. Its kernel is the normal closure of $w \in J$, since $w$ is mapped to 0 and $J/\nclose w \cong \integers$. Realizing $J$ as the fundamental group of a bouquet of circles on $w$ and $t$ and considering the associated cyclic covering space one sees that the normal closure of $w$ in $J$ is generated by conjugates of $w$ by $t^i$ for $i \in \integers$.
    
    The kernel of $J \rightarrow \integers$ maps surjectively on the kernel of $H \rightarrow \integers$. The image under $J \rightarrow H$ of the normal closure of $w$ in $J$ is the normal closure of $w$ in $H$ and the image of conjugates of $w$ by $t^i$ in $J$ are conjugates of $w$ by $t^i$ in $H$, so $\ker(\rho)=\nclose w=\langle t^{i}wt^{-i}: i \in \integers \rangle$.



We have constructed the following communtative diagram:

\begin{center}
    \begin{tikzcd}
    1 \arrow[r] \arrow[d] & \kernel(\rho \circ \tau) \arrow[r , hook] \arrow[d] & J \arrow[r, "\rho \circ \tau", two heads] \arrow[d, "\tau"] & \mathbb{Z} \arrow[r] \arrow[d, "id"] & 1 \arrow[d] \\
    1 \arrow[r]           & \kernel(\rho) \arrow[r, hook]           & H \arrow[r, "\rho", two heads]                & \mathbb{Z} \arrow[r]                & 1               
    \end{tikzcd}
\end{center}

If $\{t^{i}wt^{-i}, \ i \in \mathbb{Z}\}$ form a basis for a free subgroup of $H$, then $\tau:J \rightarrow H$ restricts to an injection on the kernel of the homomorphisms to $\integers$ and is therefore an isomorphism between the kernels. Conclude by applying the five lemma to the two short exact sequences.
\end{proof}


\begin{lem}
Let $H\subsetneq F$ be a nontrivial subgroup of a free group and let $f\in F$ be such that $fHf^{-1}\subseteq H$ but $f^n \notin H$ for any $n>0$. Then $H$ is not finitely generated.\label{properconj}\end{lem}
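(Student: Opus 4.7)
\emph{Proof plan.} The plan is to reduce the claim to a free-product decomposition via Marshall Hall's Theorem~\ref{thm:MarshallHall} and then invoke the malnormality of free factors in a free product; assuming $H$ were finitely generated, this will force a positive power of $f$ into $H$, contradicting the hypothesis. I first note that $H$ must have infinite index in $F$: otherwise the finite coset space $F/H$ would force $\{f^iH\}_{i\geq 0}$ to repeat, producing some $f^n$ with $n>0$ in $H$.

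Arguing by contradiction, assume $H$ is finitely generated. Theorem~\ref{thm:MarshallHall} supplies a finite-index subgroup $G\leq F$ in which $H$ is a free factor, so $G = H \ast L$. Since $H$ has infinite index in $F$ while $[F:G]$ is finite, $H\subsetneq G$, so the complementary factor $L$ is nontrivial. Because $G$ has finite index in $F$, the cosets $\{f^iG\}_{i\ge 0}$ must eventually repeat, yielding some $m>0$ with $f^m\in G$. Iterating the hypothesis $fHf^{-1}\subseteq H$ gives $f^mHf^{-m}\subseteq H$, and since $H$ is nontrivial we have $f^mHf^{-m}\cap H\ne\{1\}$.

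The crux is the standard fact that in a free product $A\ast B$ with $B\ne\{1\}$ the factor $A$ is malnormal: any $g\in A\ast B$ with $gAg^{-1}\cap A\ne\{1\}$ already lies in $A$, as one checks by normal-form cancellation on a reduced expression for $g$ (any syllable from $B$ at the outer end of $g$'s normal form survives conjugation of a nontrivial element of $A$). Applied to $g=f^m$ inside $G = H \ast L$, this forces $f^m\in H$, contradicting the hypothesis that $f^n\notin H$ for every $n>0$. Hence $H$ cannot be finitely generated. The step I anticipate needing the most care is the malnormality argument itself, but it is a routine normal-form calculation standard in Bass--Serre theory and could alternatively be replaced by a direct appeal to Corollary~\ref{normalfinite} after first checking via the Nielsen--Schreier rank formula that $fHf^{-1}$ must in fact equal $H$.
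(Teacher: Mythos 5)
Your main argument is correct, and it takes a genuinely different route from the paper. The paper argues topologically: it realizes $F$ as $\pi_1$ of a bouquet of circles, passes to the based cover corresponding to $H$, notes that $\{f^n\hat p\}_{n\in\integers}$ is an infinite orbit because $f^n\notin H$ for $n\neq 0$, and then uses the hypothesis $fHf^{-1}\subseteq H$ to produce infinitely many pairwise disjoint lifts of a nontrivial loop, forcing $\homology_1$ of the cover, hence $H$, to be infinitely generated. Your proof is purely algebraic: Marshall Hall's theorem places $H$ as a free factor of a finite-index subgroup $G=H\ast L$, finiteness of index forces some $f^m\in G$ with $m>0$, and malnormality of the free factor $H$ in $H\ast L$ (valid since $L\neq\{1\}$, as $H$ has infinite index while $G$ does not) then pushes $f^m$ into $H$, contradicting the hypothesis. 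Interestingly, the paper also invokes Marshall Hall (via Corollary~\ref{cor:splits}) but only downstream, to deduce Corollary~\ref{normalfinite} from the lemma, whereas you apply it directly to the lemma; what your version buys is an argument that avoids the covering-space bookkeeping (in particular the slightly informal ``pairwise disjoint lifts'' step), at the cost of importing the standard normal-form malnormality fact.

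Two caveats on the alternative you sketch at the end. First, invoking Corollary~\ref{normalfinite} would be circular in the paper's logical order, since that corollary is itself deduced from Lemma~\ref{properconj}. Second, the Nielsen--Schreier rank formula only forces $fHf^{-1}=H$ when $[H:fHf^{-1}]$ is \emph{finite}, which is not given a priori; ruling out an infinite-index proper containment $fHf^{-1}\subsetneq H$ with $H$ finitely generated again seems to need the same Marshall Hall plus malnormality input as your main argument, so the alternative does not actually shorten anything. None of this affects the correctness of your main proof.
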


\begin{proof}
Realize $F$ as $\pi_1$ of a bouquet of circles $(B, p)$. Let $(\widehat{B}, \hat{p})$ be a based covering space corresponding to $H$. 
Consider the covering transformation action of $F$ on the vertices of $\widehat{B}$. If $f^m \hat p =f^n \hat p$ then $f^{m-n} \hat p=\hat p$ which implies $f^{m-n}\in H$, which implies $m=n$ by assumption, therefore $\{f^n \hat p, n \in \integers\}$ is infinite.
By non-triviality of $H$, let $\sigma$ be a nontrivial cycle. 
Using that $f^n \notin H$ for any $n>0$, we can find a sequence of elements $\{f^{n_i} \hat{p}\}$ such that the lifts of $\sigma$ starting at $f^{n_i}\hat{p}$ are pairwise disjoint. Hence $\homology_1 (\widehat{B})$ is not finitely generated so $H = \pi_1(\widehat{B})$ is not finitely generated.
\end{proof}

The following corollary is used in the free-by-cyclic case:

\begin{cor}
Let $F$ be a free group and $H\subsetneq F$ a normal subgroup of infinite index, then $H$ is not finitely generated. \label{normalfinite}
\end{cor}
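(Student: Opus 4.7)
The plan is to derive the corollary from Lemma~\ref{properconj} (assuming as usual that $H$ is nontrivial; the trivial subgroup is of course finitely generated, so this hypothesis is clearly implicit from the discussion immediately preceding the statement). The crucial point is that normality of $H$ makes the inclusion $fHf^{-1}\subseteq H$ automatic for \emph{every} $f\in F$. Consequently, Lemma~\ref{properconj}'s hypothesis is met as soon as one can name an $f\in F$ with $f^n\notin H$ for all $n>0$, i.e.\ as soon as the coset $fH$ has infinite order in $F/H$.

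In the situations envisaged in the paper --- most importantly free-by-cyclic groups $F\rtimes_\phi\integers$ --- such an $f$ is handed to us by taking any lift of a generator of the $\integers$-factor, so Lemma~\ref{properconj} applies directly to give that $H$ is not finitely generated. More generally, whenever $F/H$ contains any element of infinite order the corollary is a one-line consequence of Lemma~\ref{properconj}.

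The one genuine subtlety is that, in principle, an infinite quotient of a free group can be a torsion group (Burnside-type phenomena), in which case no such $f$ exists. To cover this case uniformly I would re-run the topological argument of Lemma~\ref{properconj} using the full free $F/H$-action on the covering $\hat B \to B$ of the rose, rather than a single cyclic action. Concretely: choose a nontrivial cycle $\sigma\subset\hat B$ (possible because $H\neq 1$); since $\sigma$ has finitely many edges and $F/H$ acts freely on the vertices, only finitely many $g\in F/H$ satisfy $g\sigma\cap\sigma\neq\emptyset$; because $F/H$ is infinite, a greedy induction then produces $g_1,g_2,\dots\in F/H$ with the translates $g_i\sigma$ pairwise edge-disjoint, giving infinitely many linearly independent classes in $H_1(\hat B)$ and forcing $H=\pi_1(\hat B)$ to have infinite rank. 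Thus the whole content of the corollary is covered, with the torsion-quotient case being the main --- and really the only --- obstacle.
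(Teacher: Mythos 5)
Your proof is correct, but it takes a genuinely different route from the paper's. The paper argues by contradiction: assume $H$ is finitely generated, then invoke Corollary~\ref{cor:splits} (a consequence of Marshall Hall's Theorem~\ref{thm:MarshallHall}) to produce an $f$ with $\langle f, H\rangle = \langle f\rangle \ast H$, so in particular $f^n\notin H$ for all $n>0$; normality gives $fHf^{-1}\subseteq H$, and Lemma~\ref{properconj} yields the contradiction. This sidesteps entirely the obstruction you correctly identify --- that $F/H$ could be an infinite torsion group, in which case no element $f$ as in Lemma~\ref{properconj} exists --- because under the contradiction hypothesis Marshall Hall's theorem manufactures the needed $f$ with no case analysis. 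Your approach instead bifurcates: when $F/H$ has an element of infinite order you apply Lemma~\ref{properconj} directly, and in the torsion-quotient case you re-run the topological argument using the full free $F/H$-action on $\widehat B$ (finitely many translates of the cycle $\sigma$ can meet $\sigma$, so an infinite deck group greedily yields infinitely many pairwise-disjoint translates, hence infinite rank in $H_1(\widehat B)$). That argument is sound, and in fact your torsion-case reasoning applies uniformly whenever $F/H$ is infinite, so the case split is not strictly necessary. What the paper's route buys is brevity and no case analysis, at the cost of invoking Marshall Hall; what yours buys is an elementary, self-contained proof that makes the real obstruction explicit. One small inaccuracy worth flagging: your aside that in the free-by-cyclic setting ``such an $f$ is handed to us by taking any lift of a generator of the $\integers$-factor'' does not parse here, since in the paper's applications of this corollary $H$ sits inside a free group $F$ with $F\cap\langle t\rangle=\{1\}$, so the stable letter $t$ is not an eligible $f\in F$. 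This does not affect your main argument, which covers all cases anyway.
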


\begin{proof}
Suppose towards a contradiction that $H$ is finitely generated. By Corollary~\ref{cor:splits} there is an element $f\in F-H$ such that $f^n\notin H$ for any $n>0$. Since $H$ is normal, $fHf^{-1} \subseteq H$ so $H$ is not finitely generated by Lemma~\ref{properconj}.
\end{proof}

Finally, we prove the main result of this Section:

\begin{proof}[Proof of Theorem \ref{properHNN}]

We first observe that $[F: \phi_*(F)]=\infty$. Indeed, if $(\widehat{B}, \hat b)$ is a degree $d$ cover of a bouquet of $r\geq2$ circles, then $\chi (\widehat{B})=d\chi(B)=d(1-r)$. Thus $\chi (H) < \chi(F)$ for any proper finite index subgroup $H$ of $F$. So $F$ is not isomorphic to a proper finite index subgroup of itself, and therefore $\phi_*(F)$ is of infinite index.

We inductively show that there exists $f$ such that $\langle \phi^{i}(f) , \ i \in \naturals \rangle = \ast_{i \in \naturals}\langle \phi^i(f) \rangle$. By Corollary~\ref{cor:splits} there exists an $f \in F-\phi(F)$ such that $\langle f, \phi(F) \rangle=\langle f \rangle \ast \phi(F)$. Assuming by induction that for $n \in \naturals$, $\langle f, \phi^{i}(f) , \ i \in \{1, \dots, n\} \rangle = \langle f\rangle \ast_{i\in \{1, \dots, n\}} \langle \phi^i(f) \rangle$. Since $\phi$ is an isomorphism we have $\langle \phi^{i}(f) , \ i \in \{1, \dots, n+1\} \rangle = \ast_{i\in \{1, \dots, n+1\}} \langle \phi^i(f) \rangle$, and since the latter subgroup is included in $\phi(F)$ we get $\langle f, \phi^{i}(f) , \ i \in \{1, \dots, n+1\} \rangle = \langle f \rangle \ast_{i\in \{1, \dots, n+1\}} \langle \phi^i(f) \rangle$, concluding the induction.

In particular Lemma~\ref{freeinf} tells us that $\{t^{i}ft^{-i}, \ i \in \mathbb{Z}\}$ forms a basis for a free subgroup. By Corollary~\ref{infgencor} where $H=\langle f, t \rangle$ and $\rho$ from the HNN structure, $H$ is free of rank $2$. To conclude, $H\cap F$ is conjugated into itself by $t$, and $t^n \notin F$ for any $n>0$, so $H\cap F$ is not finitely generated by Lemma~\ref{properconj}.
\end{proof}



\section{Improved relative train track maps for polynomially growing automorphism}
\label{traintracksection}

Let $\phi : F \rightarrow F$ be an automorphism of a finitely generated free group. Identifying $F$ with $\pi_1(V, v)$ for some finite based graph $V$, the map $\phi$ is induced by a basepoint preserving map $\Phi: V \rightarrow V$. Similarly, outer automorphisms are induced by maps that do not preserve the basepoint.

When $|\phi^n(w)|$ is bounded by a polynomial for each word $w$ in $F$, the map $\phi$ is \textit{polynomially growing}, otherwise it is \textit{exponentially growing}. This is independent of the choice of basis, and could have been expressed using $\Phi$ and closed paths $w$ in $V$.

Bestvina, Feighn and Handel prove that there exists $n\geq1$ such that $\phi^n$ is induced by an \textit{improved relative train track map} $\Psi$ \cite[Thm 5.1.5.]{Bestvina_2000}. Passing to a power $\phi^n$ corresponds to passing to a finite index subgroup, and thus does not change the FGIP discussion.  We now describe the structure of $\Psi$ in the case where the automorphism is polynomially growing and $\rank(F)\geq 1$.
 






\begin{defn}
\label{improved}
Let $\emptyset= V^0 \subsetneq V^1 \subsetneq \cdots \subsetneq V^k = V$ be a filtration of V by subgraphs and let $S^{r}=Cl(V^{r}-V^{r-1})$ be the $r$-th stratum for $r\geq 1$. A \textit{polynomial growth improved relative train track map} $\Psi: V \rightarrow V$ sends vertices to vertices and edges to combinatorial paths, and the following hold:

\begin{enumerate}
    \item Each $V^{r}$ is $\Psi$-invariant. \label{phiinvariant}
    \item  Each $S^{r}$ consists of a single edge $e_r$, and $\Psi(e_r) = e_rP_r$, where $P_r$ is a closed path in $V^{r-1}$ whose initial point is fixed by $\Psi$. \label{polyedge} 
\end{enumerate}
\end{defn}

\begin{rem}
By passing to a power, we may assume that $\Psi$ maps some vertex $v$ to itself, and we regard $v$ as the basepoint of $V$.
\end{rem}

\begin{rem}
If two free group automorphisms $\phi$ and $\phi'$ belong to the same outer class (they differ by composition by an inner automorphism of $F$), then $F\semidirect_\phi \integers \cong F\semidirect_{\phi'} \integers$. Therefore  Theorem~\ref{polynomialcase}, and Corollary~\ref{freebchyp} partition Theorem~\ref{burnsandbrunner}. \end{rem}


\section{Polynomially growing case}
\label{polycase}

In this section we study free-by-cyclic groups that are ascending HNN extensions over polynomially growing improved relative train track maps. To do so we find a $F \times \integers$ subgroup which fails the FGIP. We start by recalling the failure of the FGIP for $F \times \integers$, which was first proved in \cite{Moldavanski68}:





 \begin{thm}
 $F \times \integers$ fails to have the FGIP, when $F$ is a non-cyclic finitely generated free group.
 \label{freecrossint}
 \end{thm}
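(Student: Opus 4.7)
The plan is to exhibit two finitely generated subgroups of $F\times\integers$ whose intersection is a nontrivial normal subgroup of infinite index in a rank-$2$ free overgroup, and then invoke Corollary~\ref{normalfinite}. Since $\rank(F)\geq 2$, fix two elements $a,b$ belonging to a free basis of $F$, and let $t$ generate the $\integers$-factor, so that $t$ is central in $F\times\integers$.

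I would set $H:=F$ and $K:=\langle a,\,bt\rangle$, both of which are finitely generated in $F\times\integers$. The first key step is to verify that $K$ is free of rank $2$. Since $t$ is central, a direct computation gives $(bt)^{i}\,a\,(bt)^{-i}=b^{i}ab^{-i}$ for every $i\in\integers$. The set $\{b^{i}ab^{-i}:i\in\integers\}$ is a basis for a free subgroup of $\langle a,b\rangle$; this is classical, seen for instance by passing to the infinite cyclic cover of a wedge of two circles corresponding to the homomorphism $\langle a,b\rangle\to\integers$ sending $a\mapsto 0$ and $b\mapsto 1$. Next, I would apply Lemma~\ref{infgencor} to $K$, with the role of $t$ in the lemma played by $bt$, with $w=a$, and with $\rho\colon K\to\integers$ the restriction of the projection $F\times\integers\to\integers$. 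The lemma then yields simultaneously that $K$ is free of rank $2$ and that
\[
\ker(\rho)=\langle (bt)^{i}a(bt)^{-i}:i\in\integers\rangle=\langle b^{i}ab^{-i}:i\in\integers\rangle.
\]

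Finally, $H\cap K=F\cap K=\ker(\rho)$, because under the projection $F\times\integers\to\integers$ the factor $F$ is exactly the kernel. This intersection is a nontrivial normal subgroup of infinite index in the rank-$2$ free group $K$ (the quotient is $\integers$), so it fails to be finitely generated by Corollary~\ref{normalfinite}. Thus $H$ and $K$ are finitely generated subgroups of $F\times\integers$ whose intersection is not finitely generated, and the FGIP fails.

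The only genuinely non-routine step is the verification that $K$ is free of rank $2$, and Lemma~\ref{infgencor} reduces even this to the classical fact that $\{b^{i}ab^{-i}\}_{i\in\integers}$ is a free basis inside $\langle a,b\rangle$. I expect no serious obstacle beyond the initial inspired choice of the subgroup $K$; once one notices that twisting $b$ by the central element $t$ kills the finite generation of the intersection with $F$, the rest is essentially automatic from the machinery already assembled in the paper.
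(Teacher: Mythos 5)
Your proof is correct and is essentially the same argument as the paper's, differing only in notation and in the level of explicitness. The paper takes $H=\langle (f_0,0),(f_1,1)\rangle$ for any $f_0,f_1$ with $\langle f_0,f_1\rangle$ non-cyclic, shows $H\cap (F\times\{0\})$ is nontrivial via the commutator $[(f_0,0),(f_1,1)]$, observes the index is infinite via the projection to $\integers$, and cites Corollary~\ref{normalfinite}; your $K=\langle a, bt\rangle$ is the special case $f_0=a$, $f_1=b$. The one place you are more careful than the paper is in explicitly verifying via Lemma~\ref{infgencor} that $K$ is free of rank $2$ — a hypothesis Corollary~\ref{normalfinite} actually requires and which the paper leaves tacit (it does hold: the projection of $H$ to $\langle f_0,f_1\rangle\leq F$ is injective precisely because $\langle f_0,f_1\rangle$ is non-cyclic, hence free of rank $2$). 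So your write-up fills a small expositional gap but is not a genuinely different route.
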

 
 \begin{proof}
Let $H=\langle (f_0, 0), (f_1, 1) \rangle $ and suppose $J=\langle f_0, f_1 \rangle$ is not a cyclic subgroup of $F$. Since $J$ is non-cyclic, $[f_0, f_1] \neq id$. Hence $H \cap (F\times \{0\})$ is nontrivial, since $[(f_0, 0), (f_1, 1)] \in F \times \{0\}$.


However $[H : H \cap F]= \infty$ because $(f_1, 1)$ has image $1$ under the homomorphism to $\integers$. The result follows from Corollary~\ref{normalfinite}.
\end{proof}





\begin{defn}[Mapping tori]
Let $\Phi : V \rightarrow V$ be a map from a connected graph to itself. The $\textit{mapping torus}$ $M_{\Phi}$ of $\Phi$ is the 2-complex: $M_{\Phi}= V\times [0,1] / 
\{(v,0) \sim (\Phi(v), 1) \ : \ \forall v \in V\}$.
When $\Phi$ is basepoint preserving, $\Phi$ induces a homomorphism $\phi : \pi_1 V \rightarrow \pi_1V$, otherwise we identify $\pi_1(V,p)$ with $\pi_1(V,\Phi(p))$ in the usual way to get a homomorphism $\phi : \pi_1 V \rightarrow \pi_1V$.
\end{defn}


A hierarchy for the group is obtained by means of Definition~\ref{improved}.\eqref{polyedge}. This gives an increasing sequence of mapping tori $M_{\Psi_i}$ where $\Psi_i=\Psi | _{V^{i}}$ and each  stage is obtained from the previous by an HNN extension whose stable letter $e_i$ conjugates $P_it$ to $t$, as described in Definition~\ref{improved}.\eqref{polyedge}. Each inclusion $M_{\Psi_i} \hookrightarrow M_{\Psi}$ is $\pi_1$-injective, so it suffices to show that $M_i$ fails to have the FGIP for some $i$.

We conclude this section with the proof of:

\begin{thm} Any ascending HNN extension of a non-cyclic finite rank free group over an improved relative train track map with no exponential stratum fails to have the FGIP.\label{polynomialcase}\end{thm}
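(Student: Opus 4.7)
The plan is to exhibit an $F_2 \times \integers$ subgroup inside $G = \pi_1 M_\Psi$ and then appeal to Theorem~\ref{freecrossint}. Since $F$ has rank at least $2$, the filtration has at least two strata contributing edges $e_1$ and $e_2$. After the standard reductions (pass to a power so that periodic vertices become $\Psi$-fixed, then collapse fixed arcs) we may assume $e_1$ and $e_2$ are loops at a single fixed vertex $v$, so both represent elements of $F = \pi_1(V, v)$. By Definition~\ref{improved}, $P_1$ is a closed path in $V^0$ and is therefore trivial, giving $\Psi(e_1) = e_1$. Similarly $P_2$ is a closed path in $V^1$, which is a single circle, so $P_2 = e_1^k$ for some $k \in \integers$ and $\Psi(e_2) = e_2 e_1^k$.

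The key computation is that $w := e_2 e_1 e_2^{-1} \in F$ is also fixed by $\Psi$:
\[
\Psi(w) \;=\; (e_2 e_1^k)\, e_1\, (e_2 e_1^k)^{-1} \;=\; e_2 \, e_1 \, e_2^{-1} \;=\; w.
\]
The elements $e_1$ and $w$ are conjugates of $e_1$ by independent elements of the rank-$2$ free subgroup $\langle e_1, e_2 \rangle \subseteq F$; a direct inspection of reduced words (or a Stallings folding) shows that they freely generate a rank-$2$ free subgroup $K \cong F_2$ of $F$.

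Since $e_1$ and $w$ are fixed by the monodromy $\phi = \Psi_*$, they commute with the stable letter $t$ in $G = F \rtimes_\phi \integers$; combined with $\langle t \rangle \cap F = \{1\}$, this yields an embedding $K \times \langle t \rangle \cong F_2 \times \integers \hookrightarrow G$. Theorem~\ref{freecrossint} then supplies two finitely generated subgroups of $F_2 \times \integers$ whose intersection is not finitely generated; those same subgroups are finitely generated in $G$ with the same intersection, so $G$ fails the FGIP. The only real technical point is justifying that the first two strata edges may be taken to be loops at a common $\Psi$-fixed basepoint, which is the standard setup for polynomially growing improved relative train track maps and holds after possibly passing to a power.
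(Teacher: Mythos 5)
Your key computational idea — that if $\Psi(e_1)=e_1$ and $\Psi(e_2)=e_2 e_1^k$ with $e_1,e_2$ loops at a common fixed vertex, then $e_1$ and $w=e_2 e_1 e_2^{-1}$ are both $\Psi$-fixed and freely generate a rank-$2$ subgroup, yielding $F_2\times\integers\hookrightarrow G$ — is correct and appealing. However, the reduction that precedes it is not valid in general, and this is where the proof breaks down.

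The claim that ``after standard reductions we may assume $e_1$ and $e_2$ are loops at a single fixed vertex'' does not hold. Definition~\ref{improved} allows the low strata of the filtration to be disconnected and to contain non-loop edges with no fixed arcs to collapse away. A concrete example: let $V$ have vertices $v_1,v_2$, with $e_1$ a loop at $v_1$, $e_2$ a loop at $v_2$, and $e_3$ an edge from $v_1$ to $v_2$, filtered by $V^1=\{e_1\}$, $V^2=\{e_1,e_2\}$, $V^3=V$, with $\Psi(e_1)=e_1$, $\Psi(e_2)=e_2$, $\Psi(e_3)=e_3 e_2^{\,j}$ for some $j\neq 0$. Here $V^2$ is disconnected, $\pi_1(V^2,v_1)\cong\integers$ rather than $F_2$, and $e_3$ is not a fixed arc, so you cannot collapse your way to a wedge of two circles. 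Your argument as written simply does not cover this configuration. (You also assume the rank-$2$ subgraph appears at the second stratum; in general one has to take the first $r$ where a component of $V^r$ has negative Euler characteristic, not $r=2$.)

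The paper handles exactly this issue by considering the smallest $r$ for which some component $J$ of $V^r$ satisfies $\chi(J)<0$, and splits into two cases depending on whether $J-e_r$ is disconnected or connected. In the disconnected case (as in the example above) it gets $\pi_1(M_{\Psi_r})\cong A*_{\integers}B$ with $A,B\cong\integers\ltimes\integers$, and in the connected case an HNN extension $\integers^2*_{\alpha^t=\beta}$; in both cases it then extracts an $F_2\times\integers$ subgroup. Your ``exhibit two $\Psi$-fixed commuting-with-$t$ generators'' idea can in fact be adapted to each of these cases (for instance, in the example above $e_1$ and $e_3 e_2 e_3^{-1}$ are both fixed), but it requires picking the right conjugate, possibly passing to $\Psi^2$ to kill an orientation reversal on a circle stratum, and it is not the same as choosing $e_1,e_2$ directly. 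As written, your proof has a genuine gap at the reduction step, and the disconnected case is simply missing.
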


\begin{proof}
Following Definition~\ref{improved}.\eqref{polyedge}, the graph $V$ is formed by adding a sequence of edges $e_1, e_2, \ldots, e_k$. Consider the smallest $r$ where some component $J$ of $V^r$ has $\euler(J)<0$. We show that $\pi_1(M_{\Psi_r})$ fails to have the FGIP. The proof splits into cases according to whether $J - e_r$ is disconnected or connected. In each case we show that there is a subgroup isomorphic to $F_2 \times \integers$ which fails the FGIP by Lemma~\ref{freecrossint}.

When $J - e_r$ is disconnected,  $J - e_r = J_1 \sqcup J_2$ and each $J_i$ deformation retracts to a cycle $c_i$. Note that since $\Psi(J_i)$ intersects $J_i$ by Definition~\ref{improved}.\eqref{polyedge}, and since $J_i$ does not map onto $e_r$ by Definition~\ref{improved}.\eqref{phiinvariant}, each component $J_i$ is mapped to itself. Since $\Psi$ induces an isomorphism, it induces automorphisms of each $\pi_1(c_i)$. Therefore, $\pi_1(M_{\Psi_r})$ splits as $A*_\integers B$ where $A$ and $B$ are isomorphic to $\integers \ltimes \integers$, and the amalgamated $\integers$ corresponds to the stable $\langle t \rangle$ on each side. In particular, we get a $\integers^2 *_\integers \integers^2$ subgroup isomorphic to $F_2\times \integers$ (e.g. use $M_{\Psi_r^2}$).

The second case is when $\euler(J - e_r) = 0$, and $e_r$ starts and ends on $J$. Note that $\Psi(J-e_r)=J-e_r$ and $\Psi_r^2$ induces the identity isomorphism on $\pi_1(J-e_r)$ just like for $J_1$ above. The space $M_{\Psi_r^2}$ is homeomorphic to the space obtained from a torus $T$ by attaching a cylinder along its two boundary circles. These circles correspond to maximal cyclic subgroups in $\pi_1T$. Consequently, $\pi_1(M_{\Psi_r^2})$ is isomorphic to an HNN extension $\integers^2 *_{\alpha^t=\beta}$. This contains a subgroup $\integers^2 *_{\langle\alpha\rangle = \langle \beta \rangle} \integers^2 \cong F_2 \times \integers$.
\end{proof}

\section{Failure of the FGIP for certain relatively hyperbolic groups}
\label{relhyp}

In this section we propose a short argument explaining the failure of the FGIP for certain relatively hyperbolic groups. We then combine this result with a powerful result on relative hyperbolicity of free-by-cyclic groups with exponentially growing automorphism \cite{gautero2007mappingtorus, ghosh2018relative, dahmani2019relative} to give an explanation of the failure of the FGIP for these groups.

\subsection{Ping-pong}
\label{pingpongsubsect}
We recall the relatively hyperbolic generalization of Gromov's application of the ping pong lemma \cite{pingpong} to a hyperbolic group acting as a convergence group on its boundary.

\begin{lem}
Let $G$ be a relatively hyperbolic group. Let $w, t \in G$ be infinite order elements with no common fixed point in $\partial G$. Then there exists $m>0$ such that $\langle w^m, t^m \rangle$ is free of rank $2$.\label{freesubg2}
\end{lem}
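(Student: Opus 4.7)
The plan is to apply the classical ping-pong lemma to the convergence action of $G$ on its Bowditch boundary $\partial G$. Every infinite order element $g$ of a relatively hyperbolic group exhibits north--south dynamics on $\partial G$: either $g$ is loxodromic with a fixed pair $\{g^+, g^-\} \subseteq \partial G$, or $g$ is parabolic with a unique fixed point $p_g \in \partial G$. Writing $F_g$ for the fixed set, the convergence property of the action asserts that for every compact $K \subseteq \partial G - F_g$ and every neighborhood $U$ of $F_g$, one has $g^n(K) \subseteq U$ for all $|n|$ sufficiently large. This is the unified statement that covers both loxodromic and parabolic cases.

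First I would use the hypothesis that $w$ and $t$ share no boundary fixed point to conclude that $F_w$ and $F_t$ are disjoint compact subsets of $\partial G$, so they admit disjoint open neighborhoods $U_w \supseteq F_w$ and $U_t \supseteq F_t$. Applying the convergence dynamics to $g = w$ with the compact set $\partial G - U_w$ (and similarly to $t$ with $\partial G - U_t$), I would then choose a single $m > 0$ large enough that $w^{\pm m}(\partial G - U_w) \subseteq U_w$ and $t^{\pm m}(\partial G - U_t) \subseteq U_t$.

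Finally, I would pick a basepoint $x \in \partial G - (U_w \cup U_t)$ and invoke the Klein ping-pong lemma: by induction on word length, any nontrivial reduced word in the alphabet $\{w^{\pm m}, t^{\pm m}\}$ applied to $x$ lands in $U_w \cup U_t$, hence represents a nontrivial element of $G$. This yields that $\langle w^m, t^m \rangle$ is free of rank $2$. The main technical point is verifying the uniform north--south dynamics when one of $w, t$ is parabolic rather than loxodromic; this follows from the standard convergence-group formalism on the Bowditch boundary (Tukia, Bowditch), and is the place where the relatively hyperbolic setting departs from Gromov's classical hyperbolic argument.
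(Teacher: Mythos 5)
Your proposal follows essentially the same route as the paper: use the convergence-group (north--south) dynamics of the $G$-action on the Bowditch boundary, separate the fixed-point sets of $w$ and $t$ by disjoint neighborhoods, and conclude by ping-pong. One small imprecision worth fixing: the Klein criterion you invoke requires that \emph{every} nontrivial syllable $w^{km}$ (with $k\neq 0$) of a reduced word send $\partial G - U_w$ into $U_w$, i.e.\ $w^n(\partial G - U_w)\subseteq U_w$ for all $|n|\geq m$, and similarly for $t$; the condition $w^{\pm m}(\partial G - U_w)\subseteq U_w$ alone does not iterate, since $w^m(y)\in U_w$ gives no control on $w^{2m}(y)=w^m(w^m(y))$. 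The uniform convergence statement you record (``$g^n(K)\subseteq U$ for all $|n|$ sufficiently large'') already supplies the stronger bound, and the paper phrases it this way: there is an $M$ such that $w^n(\bar N_x\cup \bar N_y)\subseteq N_a\cup N_b$ and $t^n(\bar N_a\cup \bar N_b)\subseteq N_x\cup N_y$ for all $|n|>M$, after which ping-pong on the two sets $\bar N_a\cup\bar N_b$ and $\bar N_x\cup\bar N_y$ finishes. Once you tighten that quantifier, your argument is correct and coincides with the paper's.
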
 

We use the action of $G$ as a convergence group on its boundary $\partial G$. \cite{Bowditch99}

\begin{proof}
 Let $\{a,b\}$ and $\{x, y \}$ be the points stabilized respectively by $w$ and  $t$. Let $N_a , N_b, N_x, N_y$ be pairwise disjoint open neighborhoods of $a,b,x,y$ respectively unless $x=y$ or $a=b$ in which case $N_x=N_y$ or $N_a=N_b$. By compactness we may assume that the closure of these neighborhoods are disjoint.  Since $G$ acts as a convergence group and $w, t$ have $\{a,b\}$ and $\{x, y \}$ as attracting/repelling points respectively. There is a constant $M$ such that $t^m (\bar {N_a} \cup \bar N_b) \subsetneq N_x \cup N_y$ and $w^m (\bar N_x \cup \bar N_y) \subsetneq N_a \cup N_b$ whenever $|m| >M$. Applying the ping pong lemma on $\bar N_a \cup \bar N_b$ and $\bar N_x \cup \bar N_y$  gives us the desired result. 
\end{proof}




\subsection{Relative hyperbolicity and the FGIP}



\begin{thm}
Let $G$ be hyperbolic relative to a collection of subgroups. Let $N \subseteq G$ be a finitely generated subgroup containing a loxodromic element $w$. Suppose $tNt^{-1} \subseteq N$ for some infinite order $t$ with $\langle t \rangle  \cap N = \{1_G\}$.  Then $G$ fails to have the FGIP. In particular, there exists $m$ such that $\langle t^m, w^m \rangle \cap N$ is not finitely generated.
\label{relhypconj2}
\end{thm}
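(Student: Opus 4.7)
The plan is to reduce to the earlier lemmas: first verify that $t$ and $w$ have no common fixed point on the Bowditch boundary $\partial G$, so that Lemma~\ref{freesubg2} produces an $m$ with $H := \langle t^m, w^m\rangle$ free of rank $2$ on $\{t^m,w^m\}$, and then apply Lemma~\ref{properconj} inside $H$ with the conjugating element $f = t^m$ to conclude that $H \cap N$ is not finitely generated.

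For the no-common-fixed-point step I would split on the type of $t$. If $t$ is parabolic, its unique fixed point on $\partial G$ is a bounded parabolic point, whereas the fixed points $w^{\pm}$ of the loxodromic $w$ are conical limit points; these two classes are disjoint in the Bowditch boundary of a relatively hyperbolic group, so $t$ and $w$ share no fixed point. If $t$ is loxodromic, then any shared fixed point with $w$ would force $t$ into the maximal elementary (virtually cyclic) subgroup $E(w)$, and hence $\langle t,w\rangle \subseteq E(w)$ would be virtually cyclic. Passing to the finite-index infinite cyclic subgroup of $E(w)$, some nontrivial power of $t$ would coincide with a nontrivial power of $w \in N$, placing that power of $t$ in $N$ and contradicting $\langle t\rangle \cap N = \{1_G\}$. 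In either case $t$ and $w$ share no fixed point on $\partial G$, so Lemma~\ref{freesubg2} applies.

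With $H = \langle t^m, w^m\rangle$ free of rank $2$ in hand, I would verify the hypotheses of Lemma~\ref{properconj} for $H \cap N \subseteq H$ with $f = t^m$. The subgroup $H \cap N$ is nontrivial because $w^m \in H \cap N$ has infinite order, and it is proper in $H$ because $t^m \in H$ but $t^m \notin N$ by $\langle t\rangle \cap N = \{1_G\}$. The inclusion $t^m(H\cap N)t^{-m} \subseteq H \cap N$ follows since $t^m \in H$ preserves $H$ under conjugation, while iterating $tNt^{-1}\subseteq N$ gives $t^m N t^{-m} \subseteq N$. Finally, $(t^m)^n = t^{mn}$ is nontrivial and lies outside $N$ for every $n>0$, again by $\langle t\rangle \cap N = \{1_G\}$. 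Lemma~\ref{properconj} then delivers that $H \cap N$ is not finitely generated; since $H$ and $N$ are both finitely generated, $G$ fails the FGIP.

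The main obstacle, and the only step that goes beyond applying lemmas already in the paper, is the no-common-fixed-point claim. It rests on two standard facts about the Bowditch boundary of a relatively hyperbolic group: bounded parabolic points and conical limit points are disjoint classes, and two loxodromic elements that share a fixed point lie in a common maximal virtually cyclic elementary subgroup. Once these are cited, the remainder of the argument is essentially bookkeeping through Lemmas~\ref{freesubg2} and~\ref{properconj}.
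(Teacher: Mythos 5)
Your proof is correct and follows essentially the same route as the paper: rule out a common fixed point of $t$ and $w$ on the Bowditch boundary, invoke Lemma~\ref{freesubg2} to obtain a rank-$2$ free subgroup $H=\langle t^m,w^m\rangle$, and apply Lemma~\ref{properconj} to $H\cap N$ with conjugating element $t^m$. The only cosmetic difference is that the paper disposes of the common-fixed-point possibility in one stroke via Bowditch's Lemma~2.2 (the stabilizer of a boundary point is virtually cyclic or contains no loxodromics), whereas you split into the parabolic and loxodromic cases for $t$; both rest on the same standard facts about the convergence action, and your bookkeeping of $t^m(H\cap N)t^{-m}\subseteq H\cap N$ is, if anything, a touch more careful than the paper's.
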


In view of the above, we propose the following definition:

\begin{defn}
A subgroup $N$ of a group $G$ is \textit{ascending} if there exists an infinite order element $t\in G$ with $\langle t \rangle \cap N = \{1_G\}$ and $tNt^{-1} \subseteq N $.
\end{defn}




\begin{proof}[Proof of Theorem \ref{relhypconj2}]
We first show that $t$ and $w$ do not share a fixed point. Let $p$ be a fixed point of $t$. If $w$ fixes $p$, then by \cite[Lem 2.2]{Bowditch99} the subgroup $\langle w, t\rangle$ is either virtually cyclic or consists only of parabolic and elliptic elements. The latter is impossible since $w$ is loxodromic, and the former is impossible because $\langle t \rangle \cap N =\{1_G\}$. 


By Lemma~\ref{freesubg2}, there exists $m \in \integers$ such that $w^m, t^m$ generate a rank~$2$ free subgroup $F$. Observe that $K=N \cap F$ is nontrivial since $w^m \in N \cap F$, and $t K t^{-1} \subseteq K$ but $t^m \notin N$ for $ m > 0$. By Lemma~\ref{properconj}, $K$ is not finitely generated.
\end{proof}

The hyperbolic case simplifies to the following.

\begin{cor}
Let $G$ be hyperbolic, let $N \subseteq G$ be a finitely generated infinite subgroup, and suppose $tNt^{-1} \subseteq N$ for some infinite order $t$ with $\langle t\rangle \cap N$ trivial.  Then $G$ fails to have the FGIP.
\label{hypconj}
\end{cor}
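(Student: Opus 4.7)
The strategy is simply to derive the hypotheses of Theorem~\ref{relhypconj2} from those of the corollary and then invoke it. A hyperbolic group is hyperbolic relative to the empty (or trivial) collection of peripheral subgroups, so the only thing that needs to be verified beyond what is already given is that $N$ contains a loxodromic element of $G$.

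For this I would argue that $N$ contains an element of infinite order, using that $N$ is finitely generated and infinite inside a hyperbolic group: a finitely generated torsion subgroup of a hyperbolic group is necessarily finite (for instance, via the fact that hyperbolic groups have only finitely many conjugacy classes of finite-order elements, or via the fact that the order of a torsion element is bounded in terms of the hyperbolicity constant). Hence $N$ contains some $w$ of infinite order, and in a word-hyperbolic group every infinite order element is loxodromic on $\partial G$.

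With a loxodromic $w \in N$ in hand, together with the ambient hypotheses $tNt^{-1} \subseteq N$ and $\langle t\rangle \cap N = \{1_G\}$, Theorem~\ref{relhypconj2} applies verbatim and produces some $m$ such that $\langle t^m, w^m\rangle \cap N$ is not finitely generated, hence $G$ fails to have the FGIP. The only mild obstacle is the folklore fact that an infinite finitely generated subgroup of a hyperbolic group contains an infinite order element; if one wants to avoid invoking it, one can instead observe that the assumption $tNt^{-1}\subseteq N$ with $t$ of infinite order already forces $N$ to contain an infinite order element via an argument using that $t$ acts on $N$ with trivial fixed subgroup of $\langle t\rangle$, but the cleaner route is to cite the standard structural result about torsion in hyperbolic groups.
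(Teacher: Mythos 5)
Your reduction is exactly the intended one: a hyperbolic group is hyperbolic relative to the trivial/empty collection of peripherals, and once $N$ is shown to contain a loxodromic element, Theorem~\ref{relhypconj2} applies directly. The paper does not spell out this verification, so you have filled in precisely the right missing step.

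One small caution on the folklore fact you invoke. It is indeed true that a finitely generated torsion subgroup of a hyperbolic group is finite, but the two parenthetical justifications you offer do not actually deliver it: a uniform bound on orders of torsion elements does not bound the size of a finitely generated torsion group (this is the Burnside problem), and finitely many conjugacy classes of finite-order elements does not immediately either. A clean route is the Tits alternative for hyperbolic groups: a subgroup either contains a nonabelian free subgroup (impossible for a torsion group) or is virtually cyclic, and a virtually cyclic torsion group is finite. With that repair, or simply citing the standard result, your argument is complete and matches the paper's intent. Your suggested alternative at the end (extracting an infinite-order element of $N$ directly from the conjugation action of $t$) is not developed and would need real work, but you correctly flag the cited-fact route as preferable.
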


\subsection{Exponential growth free-by-cyclic}
\label{rexporelhyp}
Combining Theorem~\ref{relhypconj2} with recent powerful results on relative hyperbolicity of mapping tori, we obtain an explanation of Theorem~\ref{maintheorem} for exponential growth free-by-cyclic groups.


\begin{cor}
If $\phi: F \rightarrow F$ is an exponentially growing automorphism of a finitely generated free group. Then the free-by-cyclic group $F\semidirect_\phi \integers$ fails to have the FGIP. \label{freebchyp}
\end{cor}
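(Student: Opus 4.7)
The plan is to apply Theorem~\ref{relhypconj2} directly, taking $G = F\semidirect_\phi\integers$, the subgroup $N = F$, and $t$ equal to the stable letter of the semidirect product. Three of the four hypotheses of Theorem~\ref{relhypconj2} are immediate from the semidirect product structure: $N=F$ is finitely generated by assumption; the relation $tft^{-1}=\phi(f)$ together with the surjectivity of $\phi$ gives $tFt^{-1}=\phi(F)=F$, so in particular $tNt^{-1}\subseteq N$; and $\langle t\rangle\cap F=\{1_G\}$ because the projection $G\twoheadrightarrow\integers$ sending $t\mapsto 1$ and $F\mapsto 0$ is injective on $\langle t\rangle$. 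So the work reduces to two nontrivial ingredients: equipping $G$ with a relatively hyperbolic structure, and exhibiting a loxodromic element of $G$ that lies inside $F$.

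For the first ingredient, I would invoke the results of Gautero--Lustig, Ghosh, and Dahmani \cite{gautero2007mappingtorus, ghosh2018relative, dahmani2019relative}, which show that when $\phi$ is exponentially growing, $G$ is hyperbolic relative to a collection of peripheral subgroups that are (essentially) the mapping tori of the maximal polynomially growing invariant subgraphs coming from an improved relative train track representative of $\phi$. The structure is designed so that the obstructions to word-hyperbolicity are concentrated in these polynomially growing pieces.

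For the second ingredient, which is the main point I would need to check, I would produce an element $w\in F$ that is loxodromic in $G$ with respect to this peripheral structure. Since $\phi$ is exponentially growing, the train track theory guarantees a conjugacy class in $F$ whose cyclically reduced length under $\phi^n$ grows exponentially; any representative $w$ of such a class cannot be conjugate into a polynomially growing stratum, and hence not into any peripheral subgroup. In a relatively hyperbolic group, an infinite order element not conjugate into a peripheral subgroup is automatically loxodromic (i.e.\ acts with positive translation length on the coned-off Cayley graph), so such a $w$ works. This is where the argument is least self-contained and where the reader must trust the peripheral structure from the cited papers.

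With both ingredients in hand, Theorem~\ref{relhypconj2} produces an integer $m$ such that $\langle t^m,w^m\rangle\cap F$ is not finitely generated. Since $\langle t^m, w^m\rangle$ is finitely generated (in fact free of rank two by Lemma~\ref{freesubg2}) and $F$ is finitely generated, these are two finitely generated subgroups of $G$ whose intersection is infinitely generated, witnessing the failure of the FGIP for $G$. The main obstacle, as noted, is verifying that exponential growth of $\phi$ forces the existence of a loxodromic element inside $F$ for the specific peripheral structure used; this is a question about the interplay between the train track data and the construction of \cite{gautero2007mappingtorus, ghosh2018relative, dahmani2019relative}, and all other steps are formal.
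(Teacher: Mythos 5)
Your proposal is correct and follows essentially the same route as the paper: invoke the relatively hyperbolic structure of Dahmani--Li (with peripherals the mapping tori of maximal polynomially growing subgroups), take $N=F$, $t$ the stable letter, and an exponentially growing $w\in F$ as the loxodromic element, then apply Theorem~\ref{relhypconj2}. The only difference is that you spell out why $w$ is loxodromic (not conjugate into any polynomially growing peripheral), which the paper leaves as a one-line observation.
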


\begin{proof}
 From \cite[Thm~$3.5$]{dahmani2019relative} , $F\semidirect_\phi \integers$ is hyperbolic relative to mapping tori of maximal polynomially growing subgroups. These are the conjugacy classes of finitely many subgroups of form $\langle H, gt \rangle$ where $H<F$ is polynomially growing, $g\in F$ and $t$ is the stable letter of the decomposition.

 Let $w\in F$ be an exponentially growing element. Observe that $w$ is loxodromic in the above relatively hyperbolic structure. Note that the semidirect structure ensures that $\langle t \rangle \cap F = \{1_G\}$. Hence the criterion Theorem~\ref{relhypconj2} applies with $N=F$.
\end{proof}

\textbf{Acknowledgement:} We are grateful to Piotr Przytycki for valuable feedback. We are grateful to Eduardo Martinez Pedroza for ping pong help. We are grateful to the referee for correcting our argument and clarifying the exposition.

\bibliographystyle{plain}
\bibliography{references}

~
~
~

Department of Mathematics and Statistics, McGill University, Montreal, Canada
\textit{E-mail adress:} jacob.bamberger@mail.mcgill.ca

~

Department of Mathematics and Statistics, McGill University, Montreal, Canada
\textit{E-mail adress:} daniel.wise@mcgill.ca

\end{document}